\numberwithin{equation}{section}
\numberwithin{subsection}{section}
\newtheorem*{namedtheorem}{\theoremname}
\newcommand{\theoremname}{testing}
\newtheorem{theorem}{Theorem}[section]
\newtheorem{proposition}[theorem]{Proposition}
\newtheorem{proposition-definition}[theorem]
{Proposition-Definition}
\newtheorem{corollary}[theorem]{Corollary}
\newtheorem{lemma}[theorem]{Lemma}
\newtheorem{conjecture}[theorem]{Conjecture}
\theoremstyle{definition}
\newtheorem{definition}[theorem]{Definition}
\newtheorem{remark}[theorem]{Remark}
\renewcommand{\thesubsubsection}{\ifnum\value{subsection}=0
	\arabic{section}.\arabic{subsubsection}%
\else
	\arabic{section}.\arabic{subsection}.\arabic{subsubsection}%
\fi}
\let\c@equation\c@subsubsection
\let\subsection@old\subsection
\def\subsection#1{\ifnum\value{subsubsection}>0 \ifnum\value{subsection}=0
	\setcounter{subsection}{\value{subsubsection}}%
\fi \fi
\subsection@old{#1}}
\theoremstyle{remark}
\newcommand{\cX}{{\mathcal X}} % the universal target
\newcommand{\sD}{\mathscr{D}}
\newcommand{\sZ}{\mathscr{Z}}
\newcommand{\sX}{\mathscr{X}}
\newcommand{\defi}[1]{\textsf{#1}} % for defined terms
\newcommand\cA{\mathcal{A}}
\newcommand\cD{\mathcal{D}}
\newcommand\cO{\mathcal{O}}
\newcommand{\oX}{{\overline{X}}}
\newcommand{\ocA}{{\overline{\mathcal{A}}}}
\newcommand{\tcA}{{\widetilde{\mathcal{A}}}}
\newcommand{\otcA}{{\overline{\widetilde{\mathcal{A}}}}}
\newcommand\QQ{\mathbb{Q}}
\newcommand\ZZ{\mathbb{Z}}
\newcommand\Spec{\operatorname{Spec}}
\begin{document}

\title[Campana, Vojta, and level structures]{Campana points, Vojta's conjecture, and\\
level structures on semistable abelian varieties
}

\author[D. Abramovich]{Dan Abramovich}
\address{Department of Mathematics, Box 1917, Brown University, Providence, RI, 02912, U.S.A}
\email{abrmovic@math.brown.edu}

\author[A. V\'arilly-Alvarado]{Anthony V\'arilly-Alvarado}
\address{Department of Mathematics MS 136, Rice University, 6100 S.\ Main St., Houston, TX 77005, USA}
\email{av15@rice.edu}

%14KXX  Abelian varieties
%14K10  Algebraic moduli, classification
%14K15  Arithmetic ground fields
%11G18  Arithmetic aspects of modular and Shimura varieties
%11J97   Analogues of methods in Nevanlinna theory (work of Vojta et al.)
\subjclass[2010]{Primary 11J97, 14K10; Secondary 14K15, 11G18}

\thanks{Research by D. A. partially supported by NSF grant DMS-1500525. Research by A. V.-A. partially supported by NSF CAREER grant DMS-1352291. This continues work of the authors that began during the workshop ``Explicit methods for modularity of K3 surfaces and other higher weight motives'', held at ICERM in October, 2015. We thank the organizers of the workshop and the staff at ICERM for creating the conditions that sparked this project. We thank {\sc David Zureick-Brown} for comments and {\sc Keerthi Padapusi Pera} for the appendix of \cite{AV-Campana-Vojta}.}

\date{\today}
\maketitle
\setcounter{tocdepth}{1}

%%%%%%%%%%%%%%%%%%%%%%%%%%%%%%%%%%%%%%%%%%%%%%%%%%%%%%%%%%%%%%%%%%%%%%%%%

\section{Introduction}
\label{s:intro}

%%%%%%%%%%%%%%%%%%%%%%%%%%%%%%%%%%%%%%%%%%%%%%%%%%%%%%%%%%%%%%%%%%%%%%%%%

Fix a number field $K$ with ring of integers $\cO_K$, as well as a finite set $S$ of places of $K$ that contains the archimedean places.  Denote by $\cO_{K,S}$ the corresponding ring of $S$-integers.
The purpose of this short note is to introduce a qualitative conjecture, in the spirit of {\sc Campana}, to the effect that certain subsets of rational points on a variety over $K$ or a Deligne--Mumford stack over $\cO_{K,S}$ cannot be Zariski dense; see Conjecture~\ref{conj:Campana}.  This conjecture interpolates, in a way that we make precise, between {\sc Lang}'s conjecture for rational points on varieties over $K$ of general type, and the conjecture of {\sc Lang} and {\sc Vojta} that asserts that $\cO_{K,S}$-points on a variety of logarithmic general type are not Zariski-dense.  One might thus expect our conjecture to follow from {\sc Vojta}'s quantitative conjecture on integral points; we show this is the case.  As an application we show, assuming Conjecture~\ref{conj:Campana}, that for a fixed positive integer $g$, there is an integer $m_0$ such that, for any $m > m_0$, no principally polarized abelian variety $A/K$ of dimension $g$ with semistable reduction outside of $S$ has full level-$m$ structure.

Let $\sX \to \Spec \cO_{K,S}$ be a smooth proper morphism from a scheme or Deligne--Mumford stack, and let $\sD$ be a fiber-wise normal crossings divisor on $\sX$. We say that $(\sX,\sD)$ is a \defi{normal crossings model} of its generic fiber $(X,D)$. Write $\sD = \sum_i \sD_i$ and let $D_i$ be the generic fiber of $\sD_i$. For each $D_i$ appearing in $D$, choose $0\leq \epsilon_i \leq 1$; write $\vec\epsilon = (\epsilon_1,\epsilon_2,\dots)$, $D_{\vec\epsilon} = \sum_i\epsilon_iD_i$, and $\sD_{\vec\epsilon} = \sum_i\epsilon_i\sD_i$.  Given a maximal ideal $q$ of $\cO_{K,S}$ with localization $\cO_{K,q}$, and a point $x \in \sX(\cO_{K,q})$, we denote by $n_q(\sD_i,x)$ the intersection multiplicity of $x$ and $\sD_i$, and for real numbers $a_i$, we define $n_q(\sum_i a_i \sD_i,x)$ as $\sum_i a_in_q(\sD_i,x)$; see \S\ref{s:IntMults} for details.

\begin{definition}
	\label{def:Campana}
	A point $x \in \sX(\cO_{K,S})$ is called an \defi{$\vec\epsilon$-Campana point} of $(\sX,\sD)$ if for every maximal ideal $q\subset\cO_{K,S}$ such that $n_q(\sD,x) > 0$ we have $n_q(\sD_{\vec\epsilon},x) \geq 1$.  We write $\sX(\cO_{K,S})_{\sD_{\vec\epsilon}}$ for the set of $\vec\epsilon$-Campana points of $(\sX,\sD)$.
\end{definition}

\begin{conjecture}[$\vec\epsilon$-Campana  Conjecture]
	\label{conj:Campana}
	If $K_X + \sum(1 - \epsilon_i)D_i$ is big, then $\sX(\cO_{K,S})_{\sD_{\vec\epsilon}}$ is not Zariski-dense in $X$.
\end{conjecture}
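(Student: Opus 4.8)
The plan is to deduce Conjecture~\ref{conj:Campana} from Vojta's conjecture, in the strong form that features the logarithmic discriminant together with the \emph{truncated} counting function (the ``$abc$''-type version), applied to the pair consisting of $X$ and the \emph{reduced} boundary $D=\sum_i D_i$; note that only this reduced divisor is fed to Vojta's inequality, so no rationality hypothesis on $\vec\epsilon$ is needed. When $\sX$ is a Deligne--Mumford stack rather than a scheme one runs the identical argument against the analogue of Vojta's conjecture for DM stacks; formulating (or citing) that stacky version --- with its attendant notions of height, arithmetic discriminant, counting function, and Northcott finiteness --- is the one substantive external input.

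First I would fix an ample height $h=h_A$ on $X$, heights $h_{D_i}$, and, for $x\in\sX(\cO_{K,S})$ (regarded via its generic fibre as a point of $X(K)$), the counting functions
\[
N_S(D_i,x)=\sum_{q\notin S}n_q(\sD_i,x)\,\frac{\log\#\kappa(q)}{[K:\QQ]}\quad\text{and}\quad N_S^{(1)}(D,x)=\sum_{\substack{q\notin S\\ x(q)\in\sD}}\frac{\log\#\kappa(q)}{[K:\QQ]},
\]
which agree with the intrinsic objects of Vojta's conjecture up to $O(1)$ and satisfy $N_S(D_i,x)\le h_{D_i}(x)+O(1)$. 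Vojta's conjecture for $(X,D)$ then provides, for each $\delta>0$, a proper Zariski-closed $Z\subsetneq X$ such that
\[
h_{K_X}(x)+\sum_i h_{D_i}(x)\ \le\ d_K(x)+N_S^{(1)}(D,x)+\delta\,h(x)+O(1)\qquad(x\in X(K)\setminus Z),
\]
where the discriminant term $d_K(x)$ is bounded for $K$-rational points; one also records the trivial identity $h_{K_X+\sum_i(1-\epsilon_i)D_i}(x)=h_{K_X}(x)+\sum_i(1-\epsilon_i)h_{D_i}(x)+O(1)$.

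The key step is to exploit the Campana condition. If $x\in\sX(\cO_{K,S})_{\sD_{\vec\epsilon}}$ then, by Definition~\ref{def:Campana}, each $q\notin S$ with $x(q)\in\sD$ satisfies $\sum_i\epsilon_i\,n_q(\sD_i,x)=n_q(\sD_{\vec\epsilon},x)\ge1$; multiplying by $\tfrac{\log\#\kappa(q)}{[K:\QQ]}$ and summing over all such $q$ yields $N_S^{(1)}(D,x)\le\sum_i\epsilon_i\,N_S(D_i,x)\le\sum_i\epsilon_i\,h_{D_i}(x)+O(1)$. Substituting this into Vojta's inequality and regrouping the $h_{D_i}$ terms, every Campana point outside $Z$ obeys $h_{K_X+\sum_i(1-\epsilon_i)D_i}(x)\le\delta\,h(x)+O(1)$. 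Since $K_X+\sum_i(1-\epsilon_i)D_i$ is big, write it as $\eta A+E$ with $\eta>0$ and $E$ an effective $\RR$-divisor; then the left-hand side is $\ge\eta\,h(x)-O(1)$ for $x\notin\operatorname{supp}(E)$, so choosing $\delta<\eta$ forces $h(x)=O(1)$, and Northcott finiteness confines those $x$ to a finite set. Thus $\sX(\cO_{K,S})_{\sD_{\vec\epsilon}}\subseteq Z\cup\operatorname{supp}(E)\cup(\text{finite set})$, a proper Zariski-closed subset of $X$, which is the assertion.

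I expect the real difficulty to lie not in this derivation --- which is short and uses the bigness hypothesis with nothing to spare --- but in (a) isolating and, in the Deligne--Mumford case, establishing the precise incarnation of Vojta's conjecture that is needed (the truncated-counting version for stacks), and (b) the careful bookkeeping in the dictionary between the intersection multiplicities $n_q(\sD_i,x)$ computed on the fixed model $\sX$ and the terms appearing in Vojta's inequality --- in particular the contributions of the archimedean places and of the finitely many primes of bad reduction of the model itself, all of which live in the $O(1)$ but must be controlled uniformly.
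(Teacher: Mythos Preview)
Your argument is correct and follows essentially the same route as the paper: the paper's Lemma~\ref{lem:Ntoh} is exactly your ``key step'' bounding $N^{(1)}(D,x)$ by $h_{D_{\vec\epsilon}}(x)+O(1)$ via the Campana condition, and the paper's Corollary~\ref{cor:notdense} then feeds this into the stacky Vojta inequality (Conjecture~\ref{conj:Vojta}) and uses bigness just as you do. The only cosmetic differences are that the paper states Vojta's inequality directly for integral points (so no discriminant term appears) and absorbs your decomposition $\eta A+E$ into the choice of an ample $H$ with $K_X+\sum_i(1-\epsilon_i)D_i-H$ effective.
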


\begin{remark}
{\sc Campana} stated the case of Conjecture~\ref{conj:Campana} for curves in~\cite[Conjecture~4.5 and Remark~4.6]{Campana-surfaces}. {\sc Abramovich} gave a higher dimensional statement of it in~\cite[Conjecture~2.4.19]{A-Clay}. Conjecture~\ref{conj:Campana} is a streamlined version of this generalization.
\end{remark}

\begin{remark}
	Setting $\epsilon_i = 1$ for all $i$ in Conjecture~\ref{conj:Campana}, the condition $n_q(\sD_{\vec\epsilon},x) \geq 1$ is automatically satisfied for all rational points and all $q$. Also  $K_X + \sum(1 - \epsilon_i)D_i = K_X$, hence we recover {\sc Lang}'s conjecture for rational points on varieties of general type. 
	 At the other end of the spectrum, setting all the $\epsilon_i = 0$,  the condition $n_q(\sD_{\vec\epsilon},x) \geq 1$ can only be satisfied if $n_q(\sD,x) = 0$ at all $q\subset\cO_{K,S}$, so $x$ is $S$-integral on $\cX \smallsetminus \cD$. Hence we get the {\sc Lang--Vojta} conjecture: $S$-integral points on a variety of logarithmic general type are not Zariski-dense. 
	 We show in \S\ref{s:Vojta} that Conjecture~\ref{conj:Campana} follows from {\sc Vojta}'s conjecture.
\end{remark}

%%%%%%%%%%%%%%%%%%%%%%%%%%%%%%%%%%%%%%%%%%%%%%%%%%%%%%%%%%%%%%%%%%%%%%%%%

\subsection{Application}
We give an application of Conjecture~\ref{conj:Campana}, in the spirit of our recent work~\cite{Alevels,AV-Campana-Vojta}. Recall that, for a positive integer $m$, a \defi{full level}-$m$ structure on an abelian variety $A/K$ of dimension $g$ is an isomorphism of group schemes on the $m$-torsion subgroup
	\begin{equation}
		\label{eq:levelstructure}
		\phi\colon A[m] \,\xrightarrow{\ \sim\ }\, (\ZZ/m\ZZ)^g \times (\mu_{m})^g.
	\end{equation}
\begin{theorem}
	\label{thm:main}
	Let $K$ be a number field, $S$ a finite set of places, and let $g$ be a positive integer. Assume Conjecture \ref{conj:Campana}. Then there is an integer $m_0$ such that, for any $m > m_0$, no principally polarized abelian variety $A/K$ of dimension $g$ with semistable reduction outside $S$ has full level-$m$ structure.
\end{theorem}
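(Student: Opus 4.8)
The plan is to recognize a principally polarized abelian variety $A/K$ of dimension $g$ with semistable reduction outside $S$ and full level-$m$ structure as an $\cO_{K,S}$-point of a normal crossings model attached to $\cA_g$, the moduli stack of principally polarized abelian varieties of dimension $g$, to observe that such a point is automatically an $\vec\epsilon$-Campana point for $\vec\epsilon=(1/m,1/m,\dots)$, to check that the relevant log canonical class becomes big once $m$ is large, to invoke Conjecture~\ref{conj:Campana}, and finally to upgrade the resulting non-density to emptiness. Concretely, fix a smooth projective toroidal compactification $\ocA_g\to\Spec\ZZ$ of $\cA_g$ (a Deligne--Mumford stack) with fiber-wise normal crossings boundary $\sD=\sum_i\sD_i$, so that $(\ocA_g,\sD)$ is a normal crossings model in the sense above. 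Since $\ocA_g$ is proper and its boundary parametrizes semistable (toric) degenerations, a principally polarized abelian variety $A/K$ extends to a point $x\in\ocA_g(\cO_{K,S})$ precisely when it has semistable reduction outside $S$; this realizes $A$ as a point of the model, in the spirit of \cite{Alevels,AV-Campana-Vojta}.

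The first key input is a lower bound on boundary multiplicities coming from the level structure. At a maximal ideal $q\notin S$ at which $A$ has bad reduction, the multiplicities $n_q(\sD_i,x)$ are non-negative integral combinations of the $q$-adic valuations of the period parameters $q_{jk}\in K_q^\times$ of the toric part of the degeneration (Mumford's construction). A full level-$m$ structure on $A$ defined over $K$, restricted to $K_q$, forces compatible rational $m$-th roots of these parameters to exist, whence $v_q(q_{jk})\in m\ZZ$ and therefore $n_q(\sD_i,x)\in m\ZZ$; in particular $n_q(\sD,x)\geq m$ whenever $n_q(\sD,x)>0$. This is exactly the kind of degeneration analysis carried out in \cite{Alevels,AV-Campana-Vojta}. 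Consequently, for $\vec\epsilon=(1/m,1/m,\dots)$ we get $n_q(\sD_{\vec\epsilon},x)=\tfrac1m n_q(\sD,x)\geq 1$ whenever $n_q(\sD,x)>0$, so that $x\in\ocA_g(\cO_{K,S})_{\sD_{\vec\epsilon}}$ automatically.

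The second key input is bigness. Let $\lambda=\det\mathbb{E}$ be the Hodge class on $\ocA_g$, with $\mathbb{E}$ the Hodge bundle; it is big and nef, being the pullback of the ample class on the Baily--Borel compactification, and one has $K_{\ocA_g}+\sD=(g+1)\lambda$. Therefore
\[
K_{\ocA_g}+\sum_i(1-\epsilon_i)\sD_i \;=\; (K_{\ocA_g}+\sD)-\tfrac1m\sD \;=\; (g+1)\lambda-\tfrac1m\sD .
\]
Writing the big class $(g+1)\lambda$ as the sum of an ample and an effective class and perturbing, $(g+1)\lambda-t\sD$ is big for all sufficiently small $t>0$, so there is $m_1=m_1(g)$ with $(g+1)\lambda-\tfrac1m\sD$ big for every $m\geq m_1$. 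Conjecture~\ref{conj:Campana}, applied to $(\ocA_g,\sD)$ with this $\vec\epsilon$, then gives: for $m\geq m_1$, the set $\ocA_g(\cO_{K,S})_{\sD_{\vec\epsilon}}$, hence a fortiori the set of points $x$ arising from principally polarized abelian varieties as in the theorem, is not Zariski-dense in $\ocA_g$.

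It remains to pass from non-density to emptiness, which is where the main work lies. The plan is a Noetherian induction on closed substacks $W\subseteq\ocA_g$: one proves that for each $W$ there is $m_0(W)$ such that for $m>m_0(W)$ no principally polarized abelian variety as in the theorem has moduli point lying in $W$. Whenever $W$ (via a resolution, with the restricted boundary, and possibly a smaller $\vec\epsilon$) has big log canonical class, the argument above applies and forces the relevant moduli points into a proper closed $W'\subsetneq W$, so one recurses. At strata that are not of log general type---in particular zero-dimensional ones---one instead uses that a \emph{fixed} principally polarized abelian variety $A/K$ satisfies $(\ZZ/m\ZZ)^g\hookrightarrow A(K)_{\mathrm{tors}}$ as soon as it carries a full level-$m$ structure, so that only finitely many $m$ occur by the Mordell--Weil theorem; combined with Faltings' finiteness theorem for abelian varieties of dimension $g$ with good reduction outside $S$ (which governs the locus where $n_q(\sD,x)=0$ for all $q$), this terminates the induction and produces the uniform $m_0$. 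The genuine obstacle is to run this induction through subvarieties $W\subseteq\cA_g$ that are not of log general type, where Conjecture~\ref{conj:Campana} says nothing directly; here one expects to invoke the structure theory of such subvarieties---fibered, up to finite covers, over lower-dimensional bases with abelian-type or boundary-type fibers---to reduce to the Mordell--Weil bound on the fibers and continue the induction on the base. The first two steps also rest on real input about degenerations of semistable abelian varieties with level structure, but that is supplied by \cite{Alevels,AV-Campana-Vojta}; the third step is routine positivity; and the extraction of non-density from Conjecture~\ref{conj:Campana} is immediate once the model and $\vec\epsilon$ are chosen.
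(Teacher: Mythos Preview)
Your overall architecture matches the paper's: realize semistable level-$m$ abelian varieties as integral points of a toroidal model, show they are $\epsilon$-Campana, invoke Conjecture~\ref{conj:Campana}, and then run a Noetherian induction down to dimension zero where Mordell--Weil finishes. The multiplicity estimate $n_q(\sD,x)\geq m$ and the positivity calculation $K_{\ocA_g}+\sD=(g+1)\lambda$ are both correct and close to what the paper does (compare Proposition~\ref{prop:inCampana} and \cite[Corollary~1.7]{Alevels}).

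The genuine gap is the one you yourself flag in the last paragraph. You write that the ``obstacle is to run this induction through subvarieties $W\subseteq\cA_g$ that are not of log general type,'' and you propose to handle such $W$ via an unspecified structure theory of special subvarieties together with Faltings' finiteness theorem. This is unnecessary, and the speculative workaround is not a proof. The missing input is precisely the logarithmic hyperbolicity result \cite[Theorem~1.6, Corollary~1.7]{Alevels}: for \emph{every} positive-dimensional closed subvariety $X\subseteq(\tcA_g)_K$, with boundary $D$ induced from $\otcA_g$, there exists $\epsilon>0$ such that $K_X+(1-\epsilon)D$ is big. With this in hand there are no non-log-general-type strata to worry about (above dimension zero), so the Noetherian induction closes immediately: at each step one chooses $\epsilon$ for the current component, finds $m_0$ so that level-$m$ points with $m\geq m_0$ are $\epsilon$-Campana (your multiplicity argument, or Proposition~\ref{prop:inCampana}), applies Conjecture~\ref{conj:Campana} to drop to a proper closed subset, and repeats. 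Faltings' finiteness theorem plays no role.

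A minor stylistic difference: you take $\vec\epsilon=(1/m,\dots,1/m)$ varying with $m$, whereas the paper fixes $\epsilon$ depending only on the subvariety and then bounds $m_0$ in terms of $\epsilon$. Either works, but the paper's ordering interacts more cleanly with the Noetherian induction, since the descending chain $W_i=\overline{\tcA_g(K)_{\geq i}}$ stabilizes at some $W_n$ and one only needs bigness once per irreducible component of $W_n$.
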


\begin{remark}
In~\cite{AV-Campana-Vojta}, we prove a version of Theorem~\ref{thm:main} without a semistability assumption on the abelian variety $A$, at the cost of assuming {\sc Vojta}'s conjecture.
\end{remark}

The idea behind Theorem~\ref{thm:main} is the following.  Let $(\tcA_g)_K$ denote the moduli stack parametrizing principally polarized abelian varieties of dimension $g$ over $K$. We show in Proposition~\ref{prop:inCampana} that for $\epsilon > 0$ and $\vec\epsilon = (\epsilon,\epsilon,\dots)$, if $X \subseteq (\tcA_g)_K$ is closed, then the set $X(K,S)_{\geq m_0}$ of $K$-rational points of $X$ corresponding to abelian varieties $A/K$ admitting full level-$m$ structure for some $m \geq m_0$ and having semistable reduction outside of $S$ lies inside an $\vec\epsilon$-Campana set, for $m_0 \gg 0$. We then use a result on logarithmic hyperbolicity~\cite[Theorem~1.6]{Alevels} to verify the hypothesis of Conjecture~\ref{conj:Campana} in this case, and a Noetherian induction argument to conclude the proof of Theorem~\ref{thm:main}.

%%%%%%%%%%%%%%%%%%%%%%%%%%%%%%%%%%%%%%%%%%%%%%%%%%%%%%%%%%%%%%%%%%%%%%%%%

%\tableofcontents

%%%%%%%%%%%%%%%%%%%%%%%%%%%%%%%%%%%%%%%%%%%%%%%%%%%%%%%%%%%%%%%%%%%%%%%%%

\section{Proof of Theorem \ref{thm:main}}

%%%%%%%%%%%%%%%%%%%%%%%%%%%%%%%%%%%%%%%%%%%%%%%%%%%%%%%%%%%%%%%%%%%%%%%%%

\subsection{Moduli spaces and toroidal compactifications}

	We follow the notation of~\cite[\S4]{AV-Campana-Vojta}, working over $\Spec \ZZ$:
	
	\medskip

	\begin{tabular}{ll}
		$\tcA_g \subset \otcA_g$ & a toroidal compactification of the moduli \emph{stack} of \\
	& principally polarized abelian varieties of dimension $g$\\[1mm]
		$\cA_g \subset \ocA_g$ & the resulting compactification of the moduli \emph{space} of \\
	& principally polarized abelian varieties of dimension $g$\\[1mm]
	\end{tabular}
	
	\begin{tabular}{ll}
		$\tcA_g^{[m]} \subset \otcA_g^{[m]}$ & a compatible toroidal compactification of the moduli \emph{stack} of \\
	& principally polarized abelian varieties of dimension $g$\\	
	& with full level-$m$ structure\\[1mm]
		$\cA_g^{[m]} \subset \ocA_g^{[m]}$ & the resulting compactification of the moduli space of \\
	& principally polarized abelian varieties of dimension $g$\\
	& with full level-$m$ structure\\[1mm]
	\end{tabular}
	
	\medskip
	As noted in~\cite{AV-Campana-Vojta}, we may use a construction by {\sc Faltings} and {\sc Chai}~\cite{Faltings-Chai} of the stack $\otcA_g^{[m]}$, which is a priori smooth over $\Spec \ZZ[1/m,\zeta_m]$, where $\zeta_m$ is a primitive $m$-th root of unity, to obtain a stack we denote by $(\otcA_g^{[m]})_{\ZZ[1/m]}$, smooth over $\ZZ[1/m]$. This stack is extended over all of $\Spec \ZZ$ by defining $\otcA_g^{[m]}$ as the normalization of $\otcA_g$ in  $(\otcA_g^{[m]})_{\ZZ[1/m]}$. Unfortunately, even the interior of this stack over primes dividing $m$ does not have a modular interpretation. See, however, the results of {\sc Madapusi Pera} in \cite[Appendix A]{AV-Campana-Vojta}.
	
\subsection{Semistability and integrality} We require the following well-known statement essentially contained in \cite{Faltings-Chai}.

\begin{proposition}
\label{Prop:integrality} 
Let $K$ be a number field, $S$ a finite set of places. Let $A/K$ be a principally polarized abelian variety with full level-$m$ structure, and with semistable reduction outside of $S$. Then the point $x_m\colon\Spec K \to \tcA_g^{[m]}$ associated to $A$ extends to an integral point $\xi_m\colon\Spec \cO_{K,S} \to  \otcA_g^{[m]}$.
\end{proposition}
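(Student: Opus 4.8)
The plan is to construct the extension in two stages --- first into the stack $\otcA_g$, where the semistability hypothesis and the degeneration theory of {\sc Faltings} and {\sc Chai} do the work, and then into $\otcA_g^{[m]}$, which will be purely formal. For the first stage I would begin from the hypothesis: since $A/K$ has semistable reduction at every maximal ideal of $\cO_{K,S}$, the identity component of its N\'eron model over $\Spec\cO_{K,S}$ is a semiabelian scheme $\cG\to\Spec\cO_{K,S}$ with generic fiber $A$. The principal polarization $\lambda\colon A\xrightarrow{\ \sim\ }A^\vee$ extends, by the N\'eron mapping property applied to the dual abelian variety, to a homomorphism $\cG\to\cG^\vee$ onto the dual semiabelian scheme, and this extension is again a polarization, of the generalized kind that figures in the moduli interpretation of $\otcA_g$ along its boundary. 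Thus $(\cG,\lambda)$ is a principally polarized semiabelian scheme over the Dedekind scheme $\Spec\cO_{K,S}$, restricting over $\Spec K$ to the point $x\colon\Spec K\to\tcA_g$ underlying $x_m$.

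Next I would invoke the theory of {\sc Faltings} and {\sc Chai}~\cite{Faltings-Chai}: along its boundary $\otcA_g$ carries a modular interpretation in terms of polarized degeneration data, so that a principally polarized semiabelian scheme over a normal base is classified by a morphism to $\otcA_g$. Applied to $(\cG,\lambda)$ this produces a morphism $\xi\colon\Spec\cO_{K,S}\to\otcA_g$ extending $x$; because $\Spec\cO_{K,S}$ is one-dimensional, the classifying morphism exists already for the fixed admissible cone decomposition chosen to define $\otcA_g$, with no need to refine it. I expect this step to be the main obstacle: although ``well-known and essentially contained in~\cite{Faltings-Chai}'', making it precise means assembling their results on degeneration data and on the boundary charts of $\otcA_g$ and verifying that the extended $\lambda$ is still a (principal) polarization in the degenerate sense. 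The advantage of routing through $\otcA_g$ is that this stage involves no level structure at all, and so sidesteps the fact that $\otcA_g^{[m]}$ has no modular interpretation over primes dividing $m$.

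Finally I would lift $\xi$ along $\otcA_g^{[m]}\to\otcA_g$, which is finite since $\otcA_g^{[m]}$ is by definition the normalization of $\otcA_g$ in $(\otcA_g^{[m]})_{\ZZ[1/m]}$. The fiber product $W:=\otcA_g^{[m]}\times_{\otcA_g}\Spec\cO_{K,S}$ is then a scheme, finite over $\Spec\cO_{K,S}$, and the level structure on $A$ furnishes a point $\Spec K\to W$ lying over the generic point of $\Spec\cO_{K,S}$ --- this is precisely $x_m$ viewed in $W$. Let $W'\subseteq W$ be its scheme-theoretic image. Then $W'$ is integral and finite over $\Spec\cO_{K,S}$, with generic point mapping isomorphically to that of $\Spec\cO_{K,S}$; in particular $\Gamma(W',\cO_{W'})$ is a finite $\cO_{K,S}$-subalgebra of $K$, and since $\cO_{K,S}$ is integrally closed in $K$ this forces $\Gamma(W',\cO_{W'})=\cO_{K,S}$, i.e.\ $W'=\Spec\cO_{K,S}$. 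The composite $\Spec\cO_{K,S}=W'\hookrightarrow W\to\otcA_g^{[m]}$ is then the required point $\xi_m$: it lies over $\xi$, it restricts to $x_m$ over $\Spec K$, and it is the unique lift of $\xi$ with that property because $W\to\Spec\cO_{K,S}$ is separated. One could equally phrase this via the valuative criterion of properness for the finite morphism $\otcA_g^{[m]}\to\otcA_g$ over each localization $\cO_{K,q}$, $q\notin S$, using the honest $K$-point $x_m$ to pin down the lift, and then glueing.
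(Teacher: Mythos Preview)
Your two-stage strategy --- first extend to $\otcA_g$ using the Faltings--Chai degeneration theory and the fact that the base is Dedekind, then lift through the finite representable morphism $\otcA_g^{[m]}\to\otcA_g$ --- is exactly the paper's approach; the paper makes Stage~1 precise by citing \cite[Theorem~IV.5.7(5) and Remark~IV.5.3]{Faltings-Chai} for the cone condition being automatic over a Dedekind base, and in Stage~2 it invokes the valuative criterion of properness for the finite fiber product directly, which is equivalent to your scheme-theoretic image argument via integral closedness of $\cO_{K,S}$.
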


\begin{proof} First consider the case $m=1$. By \cite[Theorem IV.5.7(5)]{Faltings-Chai} the extension exists if and only if, for every prime $q\not\in S$ and for any strictly henselization $V$ of $\cO_{K,q}$ with valuation $v$, the bimultiplicative form $v\circ b$ corresponds to a point of  a cone only depending on $q$. (Here $b$ is the symmetric  bimultiplicative form associated to the degeneration of  $A$ at $q$ by the theory of degenerations, as indicated in \cite[Proposition IV.5.1]{Faltings-Chai}.) This condition is automatic for a Dedekind domain such as $\cO_{K,q}$, see \cite[Remark IV.5.3]{Faltings-Chai}, hence our proposition holds in case $m=1$.

To prove the statement in general, consider the point $x\colon \Spec K \to \tcA_g$ obtained by composing $x_m$ with $\tcA_g^{[m]} \to \tcA_g$. Since the proposition holds for $m=1$, the point $x$ extends to $\xi\colon \Spec \cO_{K,S} \to  \otcA_g$. Since $\otcA_g^{[m]} \to \otcA_g$ is representable and finite, the stack $Z:=\Spec \cO_{K,S} \times_{\otcA_g} \otcA_g^{[m]}$, where the projection on the left is $\xi$,  is in fact a scheme finite over $\Spec \cO_{K,S}$. The point $x_m$ defines a point $\Spec K \to Z$, which extends to a point $\Spec \cO_{K,S} \to Z$ by the valuative criterion for properness. Composing with the projection $Z \to \otcA_g^{[m]}$ gives the desired point $\xi_m$. 
\end{proof}

%%%%%%%%%%%%%%%%%%%%%%%%%%%%%%%%%%%%%%%%%%%%%%%%%%%%%%%%%%%%%%%%%%%%%%%%%

\subsection{Intersection multiplicities}
\label{s:IntMults}

Let $(\sX,\sD)$ be a normal crossings model, and let $I_{\sD_i}$ denote the ideal of $\sD_i$. Given a maximal ideal $q$  of $\cO_{K,S}$ with localization $\cO_{K,q}$, and a point $x \in \sX(\cO_{K,q})$, define $n_q(\sD_i,x)$ through the equality of ideals in $\cO_{K,q}$
\[
	I_{\sD_i}\big|_x = q^{n_q(\sD_i,x)}.
\]
We call $n_q(\sD_i,x)$ the intersection multiplicity of $x$ and $\sD_i$.

%%%%%%%%%%%%%%%%%%%%%%%%%%%%%%%%%%%%%%%%%%%%%%%%%%%%%%%%%%%%%%%%%%%%%%%%%

\subsection{Notation for substacks}

	Let $X \subseteq (\tcA_g)_K$ be a closed substack, let $X'\to X$ be a resolution of singularities, $X' \subset \overline X'$ a smooth compactification with $D = \overline X' \smallsetminus X'$ a normal crossings divisor. Assume that the rational map $f\colon\oX' \to \otcA_g$ is a morphism. Let $X'_m = X'\times_{\tcA_g}\tcA_g^{[m]}$, and let $\oX'_m \to\oX'\times_{\otcA_g} \ocA_g^{[m]}$ be a resolution of singularities with projections $\pi_m^X\colon \oX'_m\to \overline{X}'$ and $f_m\colon \oX'_m\to \otcA_g^{[m]}$.

	We now spread these objects over $\cO_{K,S}$ for a suitable finite set of places $S$ containing the archimedean places. Let $(\sX,\sD)$ be a normal crossings model of $(\oX',D)$ over $\Spec \cO_{K,S}$. As above, write $\sD = \sum_i \sD_i$. Such a model exists, even for Deligne--Mumford stacks, by \cite[Proposition~2.2]{Olsson}.  To avoid clutter, in the special case when $\vec\epsilon = (\epsilon,\epsilon,\dots)$, an $\vec\epsilon$-Campana point of $\sX$ shall be called an $\epsilon$-Campana point, and we write $\sX(\cO_{K,S})_{\epsilon\sD}$ for the set of $\epsilon$-Campana points of $(\sX,\sD)$.

%%%%%%%%%%%%%%%%%%%%%%%%%%%%%%%%%%%%%%%%%%%%%%%%%%%%%%%%%%%%%%%%%%%%%%%%%

\subsection{Levels and Campana points}

	Let $X(K,S)_{[m]}$ be the set of $K$-rational points of $X$ corresponding to principally polarized abelian varieties $A/K$ with semistable reduction outside $S$, admitting full level-$m$ structure. Define
	\begin{equation}
	\label{eq:toinfty}
		X(K,S)_{\geq m_0} := \bigcup_{m \geq m_0} X(K,S)_{[m]}.
	\end{equation}

\begin{proposition}
	\label{prop:inCampana}
	Fix $\epsilon > 0$. Then there exists $m_0$ such that $X(K,S)_{\geq m_0}$ is contained in the set $\sX(\cO_{K,S})_{\epsilon\sD}$ of $\epsilon$-Campana points of $(\sX,\sD)$.
\end{proposition}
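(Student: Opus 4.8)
The plan is to control the intersection multiplicities $n_q(\sD_i, x)$ for points $x \in X(K,S)_{[m]}$ in terms of $m$, uniformly in $m$, and thereby force the $\epsilon$-Campana condition once $m_0$ is large. The starting observation is that the boundary divisor $\sD$ on $\sX$ pulls back, via the modular maps of \S2, from the toroidal boundary $\ocA_g^{[m]} \smallsetminus \cA_g^{[m]}$; more precisely, the map $\oX' \to \otcA_g$ sends $D$ into the toroidal boundary of $\otcA_g$, and the level-$m$ cover $\otcA_g^{[m]} \to \otcA_g$ is ramified along the boundary to order a fixed power of $m$ (this is the standard ``ramification along the boundary of level covers'' phenomenon, used already in \cite{Alevels,AV-Campana-Vojta}). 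So first I would make this precise: pick a point $x \in X(K,S)_{[m]}$, spread it to $\xi \in \sX(\cO_{K,S})$, and let $q$ be a maximal ideal with $n_q(\sD, \xi) > 0$. The point $x$ corresponds to a ppav $A/K$ with semistable reduction outside $S$ and full level-$m$ structure, and by Proposition~\ref{Prop:integrality} it lifts to an integral point $\xi_m \colon \Spec \cO_{K,S} \to \otcA_g^{[m]}$.

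The second step is the key estimate. At the prime $q$, the fact that $n_q(\sD, \xi) > 0$ means $A$ has bad (but semistable, hence toric) reduction at $q$. The existence of the integral point $\xi_m$ lifting $\xi$ through the ramified cover $\otcA_g^{[m]} \to \otcA_g$ forces the pullback of the local boundary parameter to be an $m$-th power (up to the fixed ramification exponent of the cover along that boundary stratum), and hence $n_q(\sD_i, \xi)$ is divisible by $m$ — or more carefully, by $m/e$ for a bounded constant $e$ depending only on $g$ (not on $m$, $q$, or $A$), coming from the toroidal chart combinatorics in \cite{Faltings-Chai}. I would extract this divisibility from the local structure of the Faltings--Chai construction: in a toroidal chart the level cover is, up to the fixed index, the $m$-th power map on the torus coordinates, so the valuation of a boundary coordinate at $q$ on $\xi$ is $m/e$ times the valuation of the corresponding coordinate on $\xi_m$, which is a positive integer since $\xi_m$ is integral and lands in the boundary at $q$. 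Consequently $n_q(\sD_i, \xi) \geq m/e$ whenever it is positive.

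The third step is bookkeeping. Given $\epsilon > 0$, choose $m_0 = \lceil e/\epsilon \rceil$ (or any integer exceeding $e/\epsilon$). Then for any $m \geq m_0$ and any $x \in X(K,S)_{[m]}$ with spread-out $\xi$, and any $q$ with $n_q(\sD,\xi) > 0$: there is at least one index $i$ with $n_q(\sD_i,\xi) > 0$, hence $n_q(\sD_i,\xi) \geq m/e \geq m_0/e \geq 1/\epsilon$, so $n_q(\sD_{\vec\epsilon}, \xi) = \sum_i \epsilon\, n_q(\sD_i,\xi) \geq \epsilon \cdot (1/\epsilon) = 1$. This is exactly the $\epsilon$-Campana condition of Definition~\ref{def:Campana}, so $\xi \in \sX(\cO_{K,S})_{\epsilon\sD}$. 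Taking the union over $m \geq m_0$ gives $X(K,S)_{\geq m_0} \subseteq \sX(\cO_{K,S})_{\epsilon\sD}$.

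The main obstacle is the second step: pinning down the boundedness of the ramification index $e$ of the level cover $\otcA_g^{[m]} \to \otcA_g$ along each boundary stratum, independently of $m$, and handling the subtlety that over primes dividing $m$ the normalization $\otcA_g^{[m]}$ has no modular interpretation (as flagged in \S2.1). For $q \mid m$ one must argue directly with the normalization and the appendix of \cite{AV-Campana-Vojta} (Madapusi Pera) rather than with a moduli description; the semistability hypothesis is what keeps the degeneration toric and lets the toroidal combinatorics apply. I would isolate the divisibility claim ``$n_q(\sD_i, \xi) > 0 \implies m \mid e \cdot n_q(\sD_i,\xi)$'' as a lemma, prove it chart-by-chart on $\otcA_g$ using the explicit toroidal coordinates of Faltings--Chai, with the bound on $e$ coming from the (finitely many, $g$-dependent) cone data, and then the rest of the proposition is the elementary counting above.
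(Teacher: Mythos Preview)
Your proposal is correct and follows essentially the same route as the paper. The paper packages your ``second step'' as a citation to \cite[Proposition~4.3]{AV-Campana-Vojta}, obtaining $n_q(\sD,x) \geq m/(M\cdot\max\{a_i\})$ where $f^*E = \sum a_i D_i$ records how the toroidal boundary pulls back to $\oX'$ and $M$ depends only on $g$; it then observes that in the semistable case one may take $M=1$ precisely because $x$ and $x_m$ extend to $\cO_{K,q}$-points (your Proposition~\ref{Prop:integrality} input), and finishes with the same arithmetic you do. Your constant $e$ amalgamates the paper's $M$ and $\max\{a_i\}$; your closing worry about primes $q\mid m$ is handled in the paper simply by the integrality of $\xi_m$ at $q$, which the semistability hypothesis guarantees, so no separate appeal to the Madapusi Pera appendix is needed here.
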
 

\begin{proof}
  Let $x_m \in X'_m(K)$, and write $\pi_m(x_m) =: x$ for its image in $\oX'(K)$. Let $q \notin S$ be a finite place of $K$, and let $\cO_{K,q}$ be the corresponding local ring. Let $\xi\colon \Spec\cO_{K,q} \to \oX'$ and $\xi_m\colon \Spec\cO_{K,q} \to \oX'_m$ be the extensions of $x$ and $x_m$ to $\Spec \cO_{K,q}$, which exist by Proposition \ref{Prop:integrality}.
  	
	Write $E$ for the boundary divisors of $\left(\otcA_g\right)_K$; on $\oX'$ we have an equality of divisors
	\[
		f^*E = \sum a_iD_i,
	\]
where each $a_i>0$; see \cite[Equation (4.3)]{Alevels}.  It follows from~\cite[Proposition~4.3]{AV-Campana-Vojta} that there exists an integer $M$ depending only on $g$ such that
\[
	n_q(\sD,x) \geq \frac{m}{M\cdot\max\{a_i\}}.
\]
We note that in our case we can take $M = 1$ because $x$ and $x_m$ could be extended to $\cO_{K,q}$-points, as the proof of~\cite[Proposition~4.3]{AV-Campana-Vojta} shows. Thus, if $m \geq \max\{a_i\}/\epsilon$, the point $x \in \oX'(K)$ is an $\epsilon$-Campana point.
\end{proof}

\begin{proof}[Proof of Theorem~\ref{thm:main}]
	We proceed by Noetherian induction. For each integer $i \geq 1$, let
	\[
		W_i = \overline{\tcA_g(K)_{\geq i}}.
	\]
	Note that $W_i$ is a closed subset of $\cA_g$, and that $W_i \supseteq W_{i+1}$ for every $i$. The chain of $W_i$ must stabilize by the Noetherian property of the Zariski topology of $\cA_g$.  Say $W_n = W_{n+1} = \cdots$. 

	We claim that $W_n$ has dimension $\leq 0$. Suppose not, and let $X \subseteq W_n$ be an irreducible component of positive dimension. Fix $\epsilon > 0$ so that $K_X + (1 - \epsilon)D$ is big: such an $\epsilon$ exists by~\cite[Corollary~1.7]{Alevels}. Hence the hypothesis of Conjecture~\ref{conj:Campana} holds (with all $\epsilon_i$ equal to $\epsilon$.) By Conjecture~\ref{conj:Campana}, the set $\sX(\cO_{K,S})_{\epsilon\sD}$ of $\epsilon$-Campana points is not Zariski dense in $X$. On the other hand, Proposition~\ref{prop:inCampana} shows there is an integer $m_0$ such that $X(K,S)_{\geq m_0} \subseteq \sX(\cO_{K,S})_{\epsilon\sD}$, from which is follows that $X(K,S)_{\geq m_0}$ is not Zariski-dense in $X$. Thus $W_{m_0}$, which equals $W_n$, does not contain $X$, and thus $X$ is not an irreducible component after all. This proves that $\dim W_n \leq 0$.
		
	Finally, if $W_n$ is a finite set of points, then we can apply the {\sc Mordell--Weil} theorem to conclude that $W_n(K)_{[m]} = \emptyset$ for all $m \gg 0$.
\end{proof}

%%%%%%%%%%%%%%%%%%%%%%%%%%%%%%%%%%%%%%%%%%%%%%%%%%%%%%%%%%%%%%%%%%%%%%%%%

\section{Vojta's conjecture and Campana points}
\label{s:Vojta}
\subsection{Counting functions for integral points}

Let $(\sX,\sD)$ be a normal crossings model. For $q \in \cO_{K,S}$, we denote by $\kappa(q)$ the residue field of the associated local ring. Following {\sc Vojta}~\cite[p.~1106]{VojtaABC}, for $x \in \sX(\cO_{K,S})$,
 define the \defi{counting function}
\begin{equation}
\label{eq:N}
	N(D,x) = \sum_{q \in \Spec\cO_{K,S}} n_q(\sD,x) \log |\kappa(q)|,
\end{equation}
as well as the \defi{truncated counting function}
\begin{equation}
\label{eq:N1}
N^{(1)}(D,x) = \sum_{\substack{q \in \Spec\cO_{K,S} \\ n_q(\sD,x)>0}} 
%\min\left(1,n_q(\sD,x)\right) 
\log |\kappa(q)|.
\end{equation}
The quantities on the right hand sides of~\eqref{eq:N} and~\eqref{eq:N1} depend on the model $(\sX,\sD)$ and the finite set $S$ only up to functions bounded on $X(\cO_{K,S})$. Since we are interested in these quantities only up to such functions, the notation $N(D,x)$ and $N^{(1)}(D,x)$ does not reflect the model $(\sX,\sD)$ or the finite set $S$.

%%%%%%%%%%%%%%%%%%%%%%%%%%%%%%%%%%%%%%%%%%%%%%%%%%%%%%%%%%%%%%%%%%%%%%%%%

\subsection{Vojta's conjecture for integral points}

 For a smooth proper Deligne--Mumford stack $\sX \to \Spec \cO_{K,S}$ over the ring of $S$-integers $\cO_{K,S}$ of a number field $K$, we write $X = \sX_K$ for the generic fiber, which we assume is irreducible, and $\underline{X}$ for the coarse moduli space of $X$. Similarly, for a normal crossings divisor $\sD$ of $\sX$, we write $D$ for its generic fiber.

For a divisor $H$ on $\underline{X}$, we denote by $h_H(x)$ the Weil height of $x$ with respect to $H$, which is well-defined up to a bounded function on $\underline{X}(\overline{K})$.  If $H$ is only a divisor on $X$, then some positive integer multiple $rH$ descends to $\underline{X}$. Given a point $x \in X(\overline{K})$ we define $h_H(x) = \frac{1}{r} h_{rH} (\underline{x})$, where $\underline{x}$ is the image of $x$ in $\underline{X}(\overline{K})$.
 
The following is a version of Vojta's conjecture for stacks, applied to integral points: 

\begin{conjecture}
%\label{conj:VojtaDM}
\label{conj:Vojta}
Let $\sX\to \Spec \cO_{K,S}$, $X$, $\underline{X}$, and $D$ be as above. Suppose that $\underline{X}$ is projective, and let $H$ be a big line bundle on it. Fix $\delta>0$. Then there is a proper Zariski-closed subset $Z \subset X$ containing $D$ such that 
\[
	N^{(1)}(D,x) \geq h_{K_X(D)}(x)- \delta h_H(x) - O(1)
\]
for all $x\in \sX(\cO_{K,S})\smallsetminus Z(K)$.
\end{conjecture}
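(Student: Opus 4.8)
Since Conjecture~\ref{conj:Vojta} is the Deligne--Mumford lift of {\sc Vojta}'s truncated ``abc'' inequality of~\cite{VojtaABC}, and that scheme-level statement is itself open, I would not aim for an unconditional proof but for a \emph{reduction}: the plan is to show that the classical conjecture of~\cite{VojtaABC} for smooth projective varieties implies the stacky statement here. First I would rigidify $\sX$ into a scheme by a finite \'etale cover, apply the known scheme inequality there, and descend. Concretely, after a finite extension $L/K$ a smooth proper Deligne--Mumford stack admits a representable finite \'etale morphism $\pi\colon Y \to \sX_{\cO_{L,S'}}$ from a scheme $Y$; for $\sX = \tcA_g$ one takes $Y$ to be the scheme $\cA_g^{[m]}$ of level-$m$ structures with $m \geq 3$, which rigidifies the moduli problem at the cost of the extension $L \supseteq K(\zeta_m)$. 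All the height and counting functions in the statement are defined through the coarse space $\underline{X}$ and only up to $O(1)$, so it suffices to transport the inequality along $\pi$ and the induced finite map $\underline{Y} \to \underline{X}$ of coarse spaces.

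The comparison of bundles is clean precisely because $\pi$ is \'etale: the relative dualizing sheaf is trivial, so $K_Y = \pi^*K_X$, while $\pi^{-1}(D)$ is again reduced normal crossings with $\pi^*D = \pi^{-1}(D)$; hence $K_Y + \pi^*D = \pi^*(K_X + D)$ with no ramification correction, and $\pi^*H$ stays big on $\underline{Y}$. Functoriality of heights gives $h_{\pi^*(K_X+D)}(y) = h_{K_X(D)}(\pi(y)) + O(1)$, and --- this is the point of the truncation --- an \'etale $\pi$ preserves the support of the boundary, so $y$ meets $\pi^*\sD$ over exactly the primes lying above those where $x = \pi(y)$ meets $\sD$. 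Summing residue-degree weights then yields $N^{(1)}(\pi^*D,y) = [L:K]\,N^{(1)}(D,x) + O(1)$. Because heights and counting functions scale identically under the extension, the factor $[L:K]$ cancels, so applying~\cite{VojtaABC} on $Y$ over $L$ gives a proper closed $Z_Y \subset Y$ off which the inequality holds, and setting $Z := \pi(Z_Y) \cup D$ transports it back to $X$, since $\pi(y) \notin \pi(Z_Y)$ forces $y \notin Z_Y$.

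The hard part will be that the rigidifying extension $L/K$ depends on the point: the points of interest carry level-$m$ structure with $m \to \infty$, so $[L:K] \geq [K(\zeta_m):K] \to \infty$ and the scheme-level conjecture cannot be invoked a single time. One would instead have to apply it over a family of growing fields $L$ and control, \emph{uniformly in $L$}, both the implied constant and the excluded locus $Z_Y$ --- and this is exactly where control is lacking, since the truncated function $N^{(1)}$ interacts delicately with ramification in $L/K$ and the dependence of $Z_Y$ on the base field is not understood. A fully functorial theory of truncated counting functions and exceptional sets on Deligne--Mumford stacks, stable under finite covers and base extension, would be needed to close this gap; absent that (and absent~\cite{VojtaABC} itself) the statement must remain a conjecture, which is precisely how it is used in the sequel.
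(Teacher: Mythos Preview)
The statement is a \emph{conjecture}; the paper does not prove it. Immediately after stating it, the authors simply remark that in \cite{AV-Campana-Vojta} they showed the stronger bounded-degree version follows from {\sc Vojta}'s original scheme-level conjecture, and that here only its consequence for integral points is recorded. So there is no proof in the paper to compare against, only that one-line pointer.

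Your first two paragraphs are a reasonable sketch of exactly that reduction: pass to a rigidifying finite \'etale cover $Y\to\sX$, use that $\pi$ \'etale gives $K_Y+\pi^{-1}D=\pi^*(K_X+D)$ with no ramification term, compare heights and truncated counting functions along $\pi$, apply the scheme conjecture on $Y$, and push the exceptional set forward. This is in line with what \cite{AV-Campana-Vojta} does.

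Your third paragraph, however, misidentifies the obstruction. For a \emph{fixed} stack $\sX$ the rigidifying cover is chosen once and for all: for $\tcA_g$ one takes a single $m_1\geq 3$ and $Y=\tcA_g^{[m_1]}$, and then every $x\in\sX(\cO_{K,S})$ lifts to $Y$ over an extension $L/K$ of degree bounded by the degree of $Y\to\sX$, independently of $x$. You have conflated this fixed rigidification with the varying level-$m$ structures appearing in the \emph{application} (Theorem~\ref{thm:main}), where $m\to\infty$; those levels live on the points being parametrized, not on the cover used to present the stack. So there is no growing family of base fields and no uniformity problem of the sort you describe. The genuine input needed is the bounded-degree form of {\sc Vojta}'s scheme conjecture (since the lift of $x$ lands in $Y(\cO_{L,S'})$ with $[L:K]$ bounded, not in $Y(\cO_{K,S})$), which is precisely the version invoked in \cite{AV-Campana-Vojta}. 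With that in hand the reduction goes through; the statement remains conjectural only because {\sc Vojta}'s scheme-level conjecture is.
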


In \cite{AV-Campana-Vojta} we showed that a stronger  conjecture, which applies to points $x\in X(\overline K)$ with $[K(x):K]$ bounded, follows from Vojta's original conjecture for schemes \cite{VojtaABC}. Here we have stated only its outcome for integral points.

\subsection{From Vojta's conjecture to Conjecture~\ref{conj:Campana}}

\begin{lemma}
	\label{lem:Ntoh}
	If $x \in \sX(\cO_{K,S})$ is an $\vec\epsilon$-Campana point, then
	\[
		N^{(1)}(D,x) \leq N(D_{\vec\epsilon},x) \leq h_{D_{\vec\epsilon}}(x) + O(1).
	\]
\end{lemma}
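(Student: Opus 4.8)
The plan is to establish the two inequalities separately. For the left-hand inequality $N^{(1)}(D,x) \leq N(D_{\vec\epsilon},x)$, I would argue term by term over the maximal ideals $q \in \Spec \cO_{K,S}$. The truncated counting function $N^{(1)}(D,x)$ collects a contribution $\log|\kappa(q)|$ exactly at those $q$ with $n_q(\sD,x) > 0$. By the definition of an $\vec\epsilon$-Campana point (Definition~\ref{def:Campana}), every such $q$ satisfies $n_q(\sD_{\vec\epsilon},x) \geq 1$. Hence at each such $q$ the contribution $n_q(\sD_{\vec\epsilon},x)\log|\kappa(q)|$ to $N(D_{\vec\epsilon},x)$ is at least $\log|\kappa(q)|$, which is the contribution to $N^{(1)}(D,x)$. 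At all other $q$, the contribution to $N^{(1)}(D,x)$ is zero, while the contribution to $N(D_{\vec\epsilon},x) = \sum_q n_q(\sD_{\vec\epsilon},x)\log|\kappa(q)|$ is nonnegative, since each $n_q(\sD_i,x) \geq 0$ and each $\epsilon_i \geq 0$. Summing over all $q$ gives the claim.

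For the right-hand inequality $N(D_{\vec\epsilon},x) \leq h_{D_{\vec\epsilon}}(x) + O(1)$, I would invoke the standard decomposition of a Weil height into local contributions. Writing $h_{D_{\vec\epsilon}}(x) = \sum_v \lambda_{D_{\vec\epsilon},v}(x)$ as a sum of local height (Weil) functions over all places $v$ of $K$ (pulled back appropriately to the coarse space, using the convention for heights on stacks recalled just before Conjecture~\ref{conj:Vojta}), the non-archimedean terms at $q \notin S$ recover precisely $N(D_{\vec\epsilon},x)$ up to $O(1)$, because $n_q(\sD_{\vec\epsilon},x)\log|\kappa(q)|$ is, by the defining equality $I_{\sD_i}|_x = q^{n_q(\sD_i,x)}$ in \S\ref{s:IntMults}, the local intersection contribution of $x$ against $\sD_{\vec\epsilon}$ at $q$. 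Since $D_{\vec\epsilon} = \sum_i \epsilon_i D_i$ is an effective $\RR$-divisor, each local height function $\lambda_{D_{\vec\epsilon},v}$ is bounded below by a constant independent of $x$; the archimedean places and the places in $S$ contribute only an $O(1)$, and the remaining non-archimedean terms are exactly $N(D_{\vec\epsilon},x)$. Therefore $N(D_{\vec\epsilon},x) \leq h_{D_{\vec\epsilon}}(x) + O(1)$.

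I expect the only subtlety to be bookkeeping rather than substance: one must make sure the local-global height decomposition is applied on the coarse moduli space $\underline{X}$ with the scaled divisor $rD_{\vec\epsilon}$ that descends, then divide by $r$, matching the height convention $h_H(x) = \tfrac{1}{r}h_{rH}(\underline x)$ stated in the excerpt; and one must confirm that the model-dependence of $N(D_{\vec\epsilon},x)$ is absorbed into the $O(1)$, exactly as noted after~\eqref{eq:N1}. The positivity of the $\epsilon_i$ and the effectivity of $D_{\vec\epsilon}$ are what make the lower bound on local heights (and hence the inequality) work; no bigness or general-type hypothesis is needed here, so this lemma is purely formal given the definitions.
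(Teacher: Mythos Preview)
Your proposal is correct and follows essentially the same route as the paper: the first inequality is proved term by term from Definition~\ref{def:Campana}, and the second is the standard fact that the counting function is bounded above by the height of an effective divisor. The paper simply cites \cite[p.~1113]{VojtaABC} or \cite[Theorem~B.8.1(e)]{HindrySilverman} for that second step, whereas you sketch the local-height argument directly; the extra bookkeeping you flag (descent to $\underline{X}$, model dependence in $O(1)$) is reasonable but not spelled out in the paper.
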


\begin{proof}
	If $x \in \sX(\cO_{K,S})_{\sD_{\vec\epsilon}}$ is an $\vec\epsilon$-Campana point, and $n_q(\sD,x) > 0$, then we have 
	\[
		\sum_i\epsilon_i\cdot n_q(\sD_i,x) = n_q(\sD_{\vec\epsilon},x) \geq 1.
	\] 
The definition~\eqref{eq:N1} of $N^{(1)}(D,x)$ gives
	\begin{align*}
		N^{(1)}(D,x) &= \sum_{n_q(\sD,x) > 0} \log |\kappa(q)|\\
		&\leq\sum_{n_q(\sD,x) > 0} n_q(\sD_{\vec\epsilon},x) \log |\kappa(q)|\\
		&= N(D_{\vec\epsilon},x) \\
		&\leq h_{D_{\vec\epsilon}}(x) + O(1),
	\end{align*}
	where the last inequality follows as in~\cite[p.~1113]{VojtaABC} or~\cite[Theorem~B.8.1(e)]{HindrySilverman}.
\end{proof}

\begin{corollary}
	\label{cor:notdense}
	{\sc Vojta}'s conjecture~\ref{conj:Vojta} implies the $\vec\epsilon$-Campana conjecture~\ref{conj:Campana}.
\end{corollary}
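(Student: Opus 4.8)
The plan is to deduce Corollary~\ref{cor:notdense} by combining Lemma~\ref{lem:Ntoh} with Conjecture~\ref{conj:Vojta}, exploiting the fact that the divisor $D_{\vec\epsilon} = \sum_i \epsilon_i D_i$ has ``small'' height relative to the canonical bundle precisely when the bigness hypothesis of Conjecture~\ref{conj:Campana} holds. So suppose $(X,D)$ is the generic fiber of a normal crossings model $(\sX,\sD)$, and assume $K_X + \sum_i(1-\epsilon_i)D_i$ is big. Observe the key numerical identity $K_X(D) - D_{\vec\epsilon} = K_X + \sum_i D_i - \sum_i \epsilon_i D_i = K_X + \sum_i(1-\epsilon_i)D_i$, so by hypothesis $L := K_X(D) - D_{\vec\epsilon}$ is big.

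First I would fix a big line bundle $H$ on the coarse space $\underline X$ (which exists, after possibly enlarging to arrange $\underline X$ projective as in the setup of Conjecture~\ref{conj:Vojta}; one can also take $H$ to be the pullback-descent of an ample bundle on $\underline X$). Since $L$ is big, there is a constant $\delta > 0$ and an effective $\QQ$-divisor $E$ such that $L - \delta H$ is $\QQ$-linearly equivalent to $E$; hence $h_L(x) \geq \delta h_H(x) - O(1)$ for all $x$ outside the support of $E$. Equivalently, writing this out, $h_{K_X(D)}(x) - h_{D_{\vec\epsilon}}(x) - \delta h_H(x) \geq -O(1)$ away from a proper closed subset $Z_1 \supseteq \operatorname{Supp}(E)$. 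Next I would apply Conjecture~\ref{conj:Vojta} with this $\delta$ and this $H$ to obtain a proper Zariski-closed $Z_2 \subset X$ containing $D$ such that $N^{(1)}(D,x) \geq h_{K_X(D)}(x) - \delta h_H(x) - O(1)$ for all $x \in \sX(\cO_{K,S}) \smallsetminus Z_2(K)$.

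Now set $Z = Z_1 \cup Z_2$, a proper Zariski-closed subset of $X$. For any $\vec\epsilon$-Campana point $x \in \sX(\cO_{K,S})_{\sD_{\vec\epsilon}}$ with $x \notin Z(K)$, Lemma~\ref{lem:Ntoh} gives $N^{(1)}(D,x) \leq h_{D_{\vec\epsilon}}(x) + O(1)$, while the two displayed inequalities above give $N^{(1)}(D,x) \geq h_{K_X(D)}(x) - \delta h_H(x) - O(1) \geq h_{D_{\vec\epsilon}}(x) - O(1)$. Wait --- these are consistent, so I need to be more careful: the point is to choose $\delta$ strictly smaller than whatever gap the bigness of $L$ provides, so that the lower bound on $N^{(1)}(D,x)$ strictly exceeds $h_{D_{\vec\epsilon}}(x) + O(1)$ unless $h_H(x)$ is bounded. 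Concretely: bigness of $L$ yields $h_L(x) \geq 2\delta h_H(x) - O(1)$ off $Z_1$ for a suitable $\delta>0$; feeding this $\delta$ into Conjecture~\ref{conj:Vojta} gives, off $Z$, the chain $h_{D_{\vec\epsilon}}(x) + O(1) \geq N^{(1)}(D,x) \geq h_{K_X(D)}(x) - \delta h_H(x) - O(1) = h_{D_{\vec\epsilon}}(x) + h_L(x) - \delta h_H(x) - O(1) \geq h_{D_{\vec\epsilon}}(x) + \delta h_H(x) - O(1)$, forcing $h_H(x) \leq O(1)$. Since $H$ is big, the set of $x \in X(K)$ with $h_H(x)$ bounded is not Zariski-dense (it is contained in a proper closed subset, by a standard consequence of bigness and Northcott for heights on the coarse space). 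Therefore $\sX(\cO_{K,S})_{\sD_{\vec\epsilon}}$ is contained in the union of $Z(K)$ and this non-dense set, hence is not Zariski-dense in $X$, which is exactly Conjecture~\ref{conj:Campana}.

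The main obstacle I anticipate is the bookkeeping around heights on stacks versus their coarse spaces: one must make sure $h_L$, $h_H$, and $h_{D_{\vec\epsilon}}$ are all defined consistently via the conventions in the excerpt (dividing by the integer $r$ that makes $rH$ descend), that the $O(1)$ errors are genuinely bounded on $X(K)$, and that ``$h_H$ bounded $\Rightarrow$ not Zariski-dense'' is legitimately invoked for a big — not necessarily ample — $H$ on a possibly singular projective coarse space. This last point is routine: replace $H$ by an ample $H'$ on $\underline X$ with $H - H'$ effective, so $h_{H'} \leq h_H + O(1)$ off the support of $H-H'$, and apply Northcott for $H'$; absorb the extra locus into $Z$. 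Everything else is the elementary manipulation of the bigness inequality and Lemma~\ref{lem:Ntoh} sketched above.
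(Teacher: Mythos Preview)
Your argument is correct and follows essentially the same route as the paper: combine Lemma~\ref{lem:Ntoh} with Conjecture~\ref{conj:Vojta}, then use the bigness of $L = K_X(D) - D_{\vec\epsilon} = K_X + \sum_i(1-\epsilon_i)D_i$ to force bounded height and conclude by Northcott. The only cosmetic difference is that the paper chooses $H$ ample with $L - H$ effective and then bounds $(1-\delta)h_L$, whereas you fix $H$ first and adjust $\delta$ so that $L - 2\delta H$ is effective to bound $h_H$; these are interchangeable bookkeeping choices, though you should clean up the mid-proof self-correction before presenting it.
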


\begin{proof}
Since the divisor $K_X + \sum_i(1 - \epsilon_i)D_i$ is big, we may choose an ample $\QQ$-divisor $H$ such that $K_X + \sum_i(1 - \epsilon_i)D_i - H$ is effective. Let $x \in \sX(\cO_{K,S})_{\sD_{\vec\epsilon}}$ be an $\vec\epsilon$-Campana point. Applying Conjecture~\ref{conj:Vojta}, we obtain a proper Zariski closed subset $\sZ \subset \sX$ such that if $x \notin \sZ$, the inequality
	\[
		N^{(1)}(D,x) \geq h_{K_X(D)}(x)- \delta h_H(x) - O(1)
	\]
	holds. By Lemma~\ref{lem:Ntoh}, we may replace the left hand side with $h_{D_{\vec\epsilon}}(x) + O(1)$ to get
	\[
		h_{D_{\vec\epsilon}}(x) + O(1) \geq h_{K_X(D)}(x)- \delta h_H(x) - O(1).
	\]
	This implies that
	\[
		O(1) \geq h_{K_X\left(\sum_i(1 - \epsilon_i)D_i\right)}(x)- \delta h_H(x).
	\]
	By our choice of $H$, we have
	\[
		O(1) \geq (1 - \delta)h_{K_X\left(\sum_i(1 - \epsilon_i)D_i\right)}(x).
	\]
	Since $K_X + \sum_i(1 - \epsilon_i)D_i$ is big, the set of $x \in \sX(\cO_{K,S})_{\sD_{\vec\epsilon}}$ that avoid $\sZ$ is finite~\cite[Theorem~B.3.2(g)]{HindrySilverman}. Since $\sZ$ is a proper Zariski-closed set, we conclude that the set $\sX(\cO_{K,S})_{\sD_{\vec\epsilon}}$ of $\vec\epsilon$-Campana points is not Zariski-dense in $X$.
\end{proof}

\bibliographystyle{plain}             % (uses file "plain.bst")
\bibliography{levels}

\end{document}